\newtheorem{theorem}{Theorem}[section]
\newtheorem{proposition}[theorem]{Proposition}
\newenvironment{nclaim}[1]
  {\innernclaim}
  {\endinnernclaim}
\theoremstyle{definition}
\newtheorem{remark}{Remark}[section]
\newtheorem{example}{Example}[section]
\def\f{\longrightarrow}
\def\e{\varepsilon}
\def\x{\bar{x}}
\def\<{\langle}
\def\>{\rangle}
\def\e{\varepsilon}
\def\R{\mathbb{R}}
\def\inte{\textnormal{int}\,}
\def\clo{\textnormal{cl}\,}
\def\bdry{\textnormal{bdry}\,}
\def\proj{\textnormal{proj}\,}
\def\nvarphi{\bp\nvarphi}
\def\sp{\hspace{0.015cm}}
\def\bp{\hspace{-0.08cm}}
\def\bbp{\hspace{-0.05cm}}
\title[The Extended Exterior Sphere Condition]{The Extended Exterior Sphere Condition}
\author[C. Nour and J. Takche]{}
 \keywords{Prox-regularity, exterior sphere condition, union of closed balls property, proximal analysis, nonsmooth analysis.}
\thanks{$^*$ Corresponding author}
\begin{document}

\maketitle

\centerline{\scshape Chadi Nour$^*$}
\medskip
{\footnotesize
\centerline{Department of Computer Science and Mathematics}
\centerline{Lebanese American University}
   \centerline{Byblos Campus, P.O. Box 36}
   \centerline{Byblos, Lebanon}
   \centerline{E-mail: cnour@lau.edu.lb}
}

\medskip

\centerline{\scshape Jean Takche}
\medskip
{\footnotesize

\centerline{Department of Computer Science and Mathematics}
\centerline{Lebanese American University}
   \centerline{Byblos Campus, P.O. Box 36}
   \centerline{Byblos, Lebanon}
   \centerline{E-mail: jtakchi@lau.edu.lb}
}

\begin{abstract} We prove that the complement of a closed set $S$ satisfying an {\it extended} exterior sphere condition is nothing but the union of closed balls with {\it common} radius. This generalizes \cite[Theorem 3]{Nacry} where the set $S$ is assumed to be prox-regular, a property {\it stronger} than the extended exterior sphere condition. We also provide a sufficient condition for the equivalence between prox-regularity and the extended exterior sphere condition that generalizes \cite[Corollary 3.12]{NT1} to the case in which $S$ is not necessarily regular closed.
\end{abstract}

\section{Introduction} Let $S\subset\R^n$ be a closed set. For $x\in S$, a vector $\zeta \in \R^n$ is said to be {\it proximal normal} to $S$  at $x$ if there exists $\sigma = \sigma(x,\zeta) \geq 0$ such that 
\begin{equation} \label{psi}
\langle \zeta,s-x\rangle \leq \sigma \|s-x\|^2,\;\;\;\forall s\in S,
\end{equation}
where $\<\cdot,\cdot\>$ and $\|\cdot\|$ denote the standard inner product and Euclidean norm, respectively.
The relation (\ref{psi}) is commonly referred to as the {\em proximal normal inequality}. Note that for $\zeta\not=0$ and $\sigma\not=0$, the proximal normal inequality (\ref{psi}) is equivalent to \begin{equation*}\label{proxball1} B\bp\left(x+\frac{1}{2\sigma}\frac{\zeta}{\|\zeta\|};{\frac{1}{2\sigma}}\right)\cap S=\emptyset,\end{equation*}
where $B(y;\rho)$ denotes the open ball of radius $\rho$ centered at $y$. In that case, we say that $\zeta$ is {\it realized} by a $\frac{1}{2\sigma}$-sphere.  On the other hand, for $\zeta\not=0$ and $\sigma=0$, the proximal normal inequality (\ref{psi}) is equivalent to $B(x+\rho\zeta;\rho)\cap S=\emptyset$ for all $\rho>0.$ In that case, $\zeta$ is realized by an $\rho$-sphere for any $\rho>0$. Now, in view of (\ref{psi}), the set of all proximal normals to $S$ at $x$ is a convex cone containing $0$, and we denote it by $N^P_S(x)$. Since no nonzero $\zeta$ satisfying \eqref{psi} exists if $x\in\inte S$ (the interior of $S$), we deduce that $N^P_S(x)=\{0\}$ for all $x\in\inte S$. This may also occur for $x\in\bdry S$, the boundary of $S$, as is the case when $S$ is the epigraph of the function $f(z) = -|z|$ and $x$ is the origin. More information about $N^P_S(\cdot)$ and proximal analysis can be found in \cite{clsw,mord,penot,rockwet}.

Now we fix $r>0$. We recall that the set $S$ is said to be {\it $r$-prox-regular} if for any $x\in\bdry S$ and {\it for any} $0\not=\zeta\in N^P_S(x)$, $\zeta$ is realized by an $r$-sphere. For more information about prox-regularity, and related properties such as {\it positive reach}, {\it proximal smoothness}, {\it $p$-convexity} and $\varphi_0$-{\it convexity}, see \cite{canino,csw,cm,fed,prt,shapiro}. On the other hand, the set $S$ is said to be satisfying the {\it exterior $r$-sphere condition} if for any $x\in\bdry S$, {\it there exists} $0\not=\zeta\in N^P_S(x)$ such that $\zeta$ is realized by an $r$-sphere. Clearly if $S$ is $r$-prox-regular, then it satisfies the exterior $r$-sphere condition. For the converse, it does not hold in general as it is shown via counterexamples in \cite{NT1}. In this latter, Nour, Stern and Takche provided sufficient condition for the equivalence between prox-regularity and the exterior sphere condition. More precisely, they proved that the two properties are equivalent  if $S$ is {\it epi-Lipschitz} (or {\it wedged}\,) and has compact boundary, see \cite[Corollary 3.12]{NT1}.\footnote{This equivalence result is generalized to the variable radius case in \cite{nst1}, see also \cite{nst5bis}.} Recall that a closed set $S$ is said to be epi-Lipschitz at a point $x\in S$ if the set $S$ can be viewed near $x$, after application of an orthogonal matrix, as the epigraph of a Lipschitz continuous function. If this holds for all $x\in S$, then we simply say that $S$ is epi-Lipschitz. This geometric definition was introduced by Rockafellar in \cite{rock}. The epi-Lipschitz property of $S$ at $x$ is also characterizable in terms of the nonemptiness of the topological interior of the Clarke tangent cone of $S$ at $x$ which is also equivalent to the {\it pointedness} of the Clarke normal cone of $S$ at $x$, see \cite[Theorem 7.3.1]{clarkeold}.

In their recent paper \cite{Nacry}, Nacry and Thibault proved in \cite[Theorem 3]{Nacry} that if $S$ is $r$-prox-regular, then the complement of $S$, denoted by $S^c$, is the union of closed balls with {\it common} radius. What is remarkable in this result is that the set $S$ can be any closed set, including the sets that are not {\it regular closed} (that is, $S\not=\clo(\inte S)$  the closure of the interior of $S$).

In this paper, we generalize  \cite[Theorem 3]{Nacry} by replacing the $r$-prox-regulariy of $S$ by {\it weaker} conditions, including the exterior $r$-sphere condition if $S$ is regular closed, and a new {\it extended} version of the exterior $r$-sphere condition if $S$ is not necessarily regular closed.

When $S$ is regular closed, the exterior $r$-sphere condition of $S$ coincides with the {\it interior $r$-sphere condition} of $(\inte S)^c$, a well known condition in control theory, see e.g., \cite{cf,cs1,cs2,NS2008}.\footnote{ In these references, the interior sphere condition is used to study the regularity of the unilateral and the bilateral minimal time functions associated to a control system.} Using this fact and \cite[Conjecture 1.2]{nst5},\footnote{This conjecture is introduced in \cite{NT1,nst2}, and proved in \cite{nst5}. A generalization of this result to the variable radius case is given in  \cite[Theorem 3.1]{nst5}, see also \cite{nourtakchejota}.} we obtain the following theorem which is the first main result of this paper.

\begin{theorem}\label{th1} Let $S\subset \R^n$ be a closed set such that $\clo(\inte S)$ satisfies the exterior $r$-sphere condition for some $r>0$. Then $(\inte S)^c$ is the union of closed $\frac{r}{2}$-balls. If in addition $S$ is regular closed, then $S^c$ is the union of closed $r'$-balls for any $r'<\frac{r}{2}$.
\end{theorem}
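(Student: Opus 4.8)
The plan is to reduce to the regular closed case and then invoke \cite[Conjecture 1.2]{nst5}. Set $T:=\clo(\inte S)$. Two elementary observations are in order: $T$ is regular closed, being the closure of an open set, and $\inte T=\inte S$, since $T\subseteq\clo(S)=S$ gives $\inte T\subseteq\inte S$, while $\inte S$ is an open subset of $\clo(\inte S)=T$ and hence $\inte S\subseteq\inte T$. In particular $(\inte S)^c=(\inte T)^c$; writing $C:=(\inte T)^c$, the regularity of $T$ also yields $\inte C=(\clo(\inte T))^c=T^c$ and $\bdry C=C\setminus\inte C=T\setminus\inte T=\bdry T$.

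Next I would make precise, in the present generality, the standard equivalence between the exterior and interior sphere conditions. A nonzero $\zeta\in N^P_T(x)$ is realized by an $r$-sphere exactly when there is an open ball $B(y;r)$ with $\|x-y\|=r$ and $B(y;r)\cap T=\emptyset$, and the last condition reads $B(y;r)\subseteq T^c=\inte C$. Hence $T$ satisfies the exterior $r$-sphere condition if and only if each $x\in\bdry C$ lies on the boundary sphere of some open $r$-ball contained in $\inte C$, that is, if and only if $C$ satisfies the interior $r$-sphere condition. Applying \cite[Conjecture 1.2]{nst5} to $C$ then shows that $C=(\inte S)^c$ is the union of closed $\frac r2$-balls, which is the first assertion.

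For the second assertion, suppose in addition that $S$ is regular closed, so $T=S$ and, by the first part, $(\inte S)^c$ is the union of closed $\frac r2$-balls. Fix $r'<\frac r2$ and $p\in S^c$, and choose $y$ with $p\in\bar B(y;\frac r2)\subseteq(\inte S)^c$. I first claim that $B(y;\frac r2)\subseteq S^c$: if some $q\in B(y;\frac r2)$ lay in $\bdry S=\clo(\inte S)\setminus\inte S$, then the neighbourhood $B(y;\frac r2)$ of $q$ would meet $\inte S$, contradicting $B(y;\frac r2)\subseteq(\inte S)^c$; so $B(y;\frac r2)\cap\bdry S=\emptyset$ and therefore $B(y;\frac r2)\subseteq(\inte S)^c\setminus\bdry S=S^c$. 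A sliding-and-shrinking argument then finishes the proof: a short computation shows that the closed ball of radius $r'$ centred at $c:=y+\left(1-\frac{2r'}{r}\right)(p-y)$ contains $p$ and is contained in $B(y;\frac r2)\cup\{p\}\subseteq S^c$ (it lies in the open ball when $\|p-y\|<\frac r2$, and otherwise meets $\bdry B(y;\frac r2)$ only at $p$). Thus every $p\in S^c$ belongs to a closed $r'$-ball inside $S^c$, and $S^c$ is the union of closed $r'$-balls.

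The only genuinely deep ingredient, invoked as a black box, is \cite[Conjecture 1.2]{nst5}. Among the steps actually performed here, the delicate one is the last: one must check that the interiors of the closed $\frac r2$-balls covering $(\inte S)^c$ fall inside the strictly smaller open set $S^c$ — this is precisely where the hypothesis that $S$ is regular closed enters — and that an arbitrarily small but genuine loss of radius is unavoidable, which is why the statement for $S^c$ is given for every $r'<\frac r2$ rather than for $\frac r2$ itself.
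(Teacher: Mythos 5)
Your proposal is correct and follows essentially the same route as the paper: the first assertion is reduced, via the regular closedness of $\clo(\inte S)$ and the identity $\inte(\clo(\inte S))=\inte S$, to the interior $r$-sphere condition for $(\inte S)^c$ and then to \cite[Conjecture 1.2]{nst5}, and the second assertion is obtained by shrinking and sliding each closed $\frac r2$-ball to a closed $r'$-ball whose interior sits in $B(y;\frac r2)\subset\inte((\inte S)^c)=S^c$. Your single translated center $c=y+(1-\frac{2r'}{r})(p-y)$ merely merges the paper's two cases ($\|y-p\|<r'$ versus $\|y-p\|\ge r'$) into one formula; the substance is identical.
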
 
Note that the \enquote{In addition} part of Theorem \ref{th1} is not necessarily true  when $S$ is not regular closed, see Example \ref{ex1}. On the other hand, the assumption \enquote{$\clo(\inte S)$ satisfies the exterior $r$-sphere condition} is {\it weaker} than the assumption \enquote{$S$ satisfies the exterior $r$-sphere condition}, see Proposition \ref{prop1} and Example \ref{ex2}.

In the following theorem, which is our second main result, we prove that a similar result to \cite[Theorem 3]{Nacry} can be obtained if the prox-regularity is replaced by a weaker property, namely the following {\it extended} exterior $r$-sphere condition. We say that $S$ satisfies the  extended exterior $r$-sphere condition if:
\begin{itemize}
\item  For any $x\in\bdry(\inte S)\subset \bdry S$, there exists $0\not=\zeta\in N^P_S(x)$ such that $\zeta$ is realized by an $r$-sphere. 
\item For any $x\in(\bdry S)\setminus \bdry(\inte S)$ and for any $0\not=\zeta\in N^P_S(x)$, $\zeta$ is realized by an $r$-sphere. 
\end{itemize}

\begin{theorem}\label{th2} Let $S\subset \R^n$ be a closed set  satisfying the extended exterior $r$-sphere condition for some $r>0$. Then  $S^c$ is the union of closed $\frac{r}{2}$-balls. \end{theorem}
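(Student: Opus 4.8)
The plan is to split $S^c$ according to where the "witness" points on $\bdry S$ live, exploiting the case split built into the definition of the extended exterior $r$-sphere condition. First I would write $S^c = (\inte S)^c \cap (\bdry S)^c$... more precisely, take $y\in S^c$ and let $x\in\bdry S$ be a (a) closest point of $S$ to $y$; then $\zeta := y-x \in N^P_S(x)$ is nonzero, and the open ball $B(y;\|y-x\|)$ misses $S$. Two situations arise. If $x\in(\bdry S)\setminus\bdry(\inte S)$, the second clause of the extended condition applies: \emph{every} nonzero proximal normal at $x$ — in particular $\zeta$ — is realized by an $r$-sphere, so $B(x+r\tfrac{\zeta}{\|\zeta\|};r)\cap S=\emptyset$, and one checks that this ball (shrunk to radius $\tfrac r2$, or rather an appropriate closed $\tfrac r2$-ball inside it through $x$) covers $y$ once $\|y-x\|\le \tfrac r2$; for the case $\|y-x\|>\tfrac r2$ one slides down the segment $[x,y]$ to land within distance $\tfrac r2$ and still have a closest-point structure.

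The more delicate situation is $x\in\bdry(\inte S)$. Here the extended condition only guarantees that \emph{some} nonzero $\eta\in N^P_S(x)$ is realized by an $r$-sphere, and $\eta$ need not be related to our $\zeta=y-x$. The idea is that this is exactly the regime governed by Theorem~\ref{th1}: the hypothesis that $S$ satisfies the extended exterior $r$-sphere condition forces, on $\bdry(\inte S)$, precisely the exterior $r$-sphere condition for the set $\clo(\inte S)$. Indeed $\bdry(\inte S)=\bdry(\clo(\inte S))$, and for $x\in\bdry(\inte S)$ one has $N^P_S(x)\subset N^P_{\clo(\inte S)}(x)$ is false in general but the relevant inclusion $N^P_{\clo(\inte S)}(x)\subset N^P_S(x)$ \emph{does} hold because $S\supset\clo(\inte S)$; combined with the first clause this shows $\clo(\inte S)$ satisfies the exterior $r$-sphere condition. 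Hence Theorem~\ref{th1} gives that $(\inte S)^c$ is the union of closed $\tfrac r2$-balls. Since $S^c\subset(\inte S)^c$, every $y\in S^c$ lies in some closed $\tfrac r2$-ball $\bar B\subset(\inte S)^c$; the point is then to show $\bar B$ can be chosen inside $S^c$, i.e. that the $\tfrac r2$-ball produced by Theorem~\ref{th1} around $y$ actually avoids all of $S$, not merely $\inte S$.

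To close that last gap I would argue as follows: given $y\in S^c$, pick its closest point $x\in S$. If $x\in(\bdry S)\setminus\bdry(\inte S)$ we are in the first (easy) situation above and conclude directly. If $x\in\bdry(\inte S)$, then $x\in\clo(\inte S)$ too, and $x$ is also the closest point of $\clo(\inte S)$ to $y$ (since $\clo(\inte S)\subset S$ and $x\in\clo(\inte S)$ realizes the distance to $S$), so $y\in(\clo(\inte S))^c=(\inte S)^c$ with the same closest-point data; applying the regular-closed machinery of Theorem~\ref{th1} to $\clo(\inte S)$ along the segment $[x,y]$ yields a closed $\tfrac r2$-ball $\bar B\ni y$ with $\bar B\cap\clo(\inte S)=\emptyset$. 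It remains to see $\bar B\cap S=\emptyset$: any point of $S\setminus\clo(\inte S)$ lies in $(\bdry S)\setminus\bdry(\inte S)$ or in... actually $S\setminus\clo(\inte S)\subset(\bdry S)\setminus\bdry(\inte S)$, and these points carry the strong (prox-regular-type) second clause, which can be used to push $\bar B$ off them by the standard "union of balls $\Rightarrow$ distance-function regularity" estimates (Clarke--Ledyaev--type, as in \cite[Conjecture 1.2]{nst5}). The main obstacle I anticipate is precisely this verification that the $\tfrac r2$-balls coming from the regular-closed part genuinely avoid the "thin" part $S\setminus\clo(\inte S)$ of $S$; handling it cleanly will likely require running the approximation/limiting argument of Theorem~\ref{th1} simultaneously with the second clause of the extended condition, rather than invoking Theorem~\ref{th1} as a black box.
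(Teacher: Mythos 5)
Your decomposition of $S^c$ according to whether the closest point of $S$ to $y$ lies in $(\bdry S)\setminus\bdry(\inte S)$ or in $\bdry(\inte S)$ is exactly the paper's, and your treatment of the first alternative (together with the trivial subcase $d_S(y)>\frac{r}{2}$) reproduces its Cases 1 and 2. The second alternative, however, is left open. You do correctly conclude that $\clo(\inte S)$ satisfies the exterior $r$-sphere condition --- although the inclusion that delivers this is $N^P_S(x)\subset N^P_{\clo(\inte S)}(x)$ (a proximal normal to the larger set $S$ is a proximal normal to the subset $\clo(\inte S)$, and a ball missing $S$ misses $\clo(\inte S)$), i.e.\ precisely the direction you dismiss as ``false in general,'' exactly as in Proposition \ref{prop1}$(iv)$. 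From there Theorem \ref{th1} gives a closed $\frac{r}{2}$-ball through $y$ contained in $(\inte S)^c$, but such a ball may meet the thin part $S\setminus\clo(\inte S)$, and your proposal to ``push the ball off'' that part via distance-function regularity estimates is not an argument: the thin part can sit arbitrarily close to, or inside, every such ball through $y$, and nothing guarantees a perturbation avoiding it while still containing $y$. This is the actual content of the theorem (compare Example \ref{ex3}, where the plain exterior sphere condition fails for exactly this reason), and you acknowledge yourself that closing it would require redoing the limiting argument rather than citing Theorem \ref{th1}; so the hard case is identified but not proved.

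For comparison, the paper does not invoke Theorem \ref{th1} here at all. With $s_0\in\proj_S(x)$, $r_0=\|x-s_0\|\leq\frac{r}{2}$ and $s_0\in\bdry(\inte S)$, it takes $z_\e\in B(s_0;\e)\cap\inte S$, a point $s_\e\in[z_\e,x]\cap\bdry S$, and a unit $\zeta_\e\in N^P_S(s_\e)$ realized by an $r$-sphere, so that $B(y_\e;r)$ with $y_\e=s_\e+r\zeta_\e$ misses \emph{all} of $S$, not merely $\inte S$. The conclusion then rests on a quantitative two-ball covering computation: the union $B(x;r_0)\cup B(y_\e;r)$ of the two balls missing $S$ contains a ball tangent at $x$ in the direction $y_\e-x$ of radius $r_\e=\frac{r_0^2\|y_\e-x\|}{\|y_\e-x\|^2+r_0^2-r^2}$, and the near-tangency estimate $\<\xi_\e,\zeta_\e\><\frac{\e}{2r}$ shows that for $\e\leq\frac{r_0^3}{4r^2}$ one has $r_\e>\frac{r}{2}$. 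Some explicit argument of this kind is what your proposal is missing.
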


One can easily see that if $S$ satisfies the extended exterior $r$-sphere condition then $S$ satisfies the exterior $r$-sphere condition. The converse, which does not hold in general (see Example \ref{ex2}), is valid  if  $S$ is regular closed. Note that the extended exterior $r$-sphere condition of Theorem \ref{th2} cannot be replaced by the exterior $r$-sphere condition as it will be shown in Example \ref{ex3}.  On the other hand, clearly if $S$ is $r$-prox-regular then $S$ satisfies the extended exterior $r$-sphere condition. The converse does not hold in general, even if $S$ is regular closed, as it is proved in the counterexamples of \cite[Section 2]{NT1}. In the following theorem which is our third and last main result, we provide a sufficient condition for the equivalence between prox-regularity and the extended exterior sphere condition. It generalizes \cite[Corollary 3.12]{NT1} to the case in which $S$ is not necessarily regular closed. 

\begin{theorem}\label{th3} Let $S\subset\R^n$ be a closed set such that $\clo(\inte S)$ is epi-Lipschitz and has compact boundary. Then $S$ is $r$-prox-regular for some $r>0$ if and only if $S$ satisfies the extended exterior $r'$-sphere condition for some $r'>0$.

\end{theorem}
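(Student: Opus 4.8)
The "only if" direction is immediate: if $S$ is $r$-prox-regular then $S$ satisfies the extended exterior $r$-sphere condition (as already noted in the text), hence with $r'=r$. So the plan focuses entirely on the "if" direction, and here the natural strategy is to reduce to the already-known result \cite[Corollary 3.12]{NT1}, which handles the regular closed case, applied to the set $T:=\clo(\inte S)$. By hypothesis $T$ is epi-Lipschitz with compact boundary, and $T$ is regular closed by construction (for an epi-Lipschitz, hence in particular for a set that is the closure of its interior, one has $\clo(\inte T)=T$). So the first step is: from the extended exterior $r'$-sphere condition on $S$, extract that $T$ satisfies the exterior $r'$-sphere condition. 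This should follow because $\bdry T=\bdry(\inte S)$ (using regular-closedness of $T$ together with $\inte T=\inte S$), and on $\bdry(\inte S)$ the extended condition gives, for each $x$, a nonzero proximal normal to $S$ realized by an $r'$-sphere; one then checks $N^P_S(x)\subseteq N^P_T(x)$ at such points, since $T\subseteq S$ implies the proximal normal inequality for $S$ passes to $T$ — wait, inclusion goes the wrong way, so more care is needed: one must verify that a $\zeta$ realized by an $r'$-sphere for $S$ (i.e. an open ball of radius $r'/2$ missing $S$) is also realized by an $r'$-sphere for $T$, which is automatic since $T\subseteq S$ and the defining ball is anchored at $x\in\bdry T$. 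Then \cite[Corollary 3.12]{NT1} yields that $T$ is $\rho$-prox-regular for some $\rho>0$.

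The second and main step is to upgrade prox-regularity of $T=\clo(\inte S)$ to prox-regularity of $S$ itself. This is exactly the place where the "extended" part of the hypothesis must do work beyond the classical setting, because the points of $(\bdry S)\setminus\bdry(\inte S)$ — the "thin" or "lower-dimensional" part of $S$ not captured by $T$ — are not seen by $T$ at all. For those points the extended exterior $r'$-sphere condition is a \emph{prox-regularity-type} condition already ("for any $0\ne\zeta\in N^P_S(x)$, $\zeta$ is realized by an $r'$-sphere"), so on that portion of the boundary there is nothing to prove; the issue is only a uniform choice of radius. So the plan is: take $r^*=\min\{\rho,r'\}$ (or a slightly smaller value if needed) and show that for every $x\in\bdry S$ and every nonzero $\zeta\in N^P_S(x)$, $\zeta$ is realized by an $r^*$-sphere. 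Split on whether $x\in\bdry(\inte S)$ or not. For $x\notin\bdry(\inte S)$ this is the extended condition directly (with radius $r'\ge r^*$). For $x\in\bdry(\inte S)$ one needs: every proximal normal to $S$ at such $x$ is a proximal normal to $T$ at $x$ (which should hold because near $x$ the sets $S$ and $T$ agree "from the relevant side" — more precisely one argues that if $B(x+\tfrac{1}{2\sigma}\tfrac{\zeta}{\|\zeta\|};\tfrac{1}{2\sigma})\cap S=\emptyset$ then a fortiori it misses $T\subseteq S$, so $\zeta\in N^P_T(x)$), and then $\rho$-prox-regularity of $T$ realizes $\zeta$ by a $\rho$-sphere, hence by an $r^*$-sphere.

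I expect the genuine obstacle to be the interaction between the two boundary pieces at their interface, i.e. points $x$ that lie in $\bdry(\inte S)$ but are limits of points of $(\bdry S)\setminus\bdry(\inte S)$, and more fundamentally the verification that the proximal normal cone behaves well under the passage $S\leadsto T$: one needs $N^P_S(x)\subseteq N^P_T(x)$ for $x\in\bdry(\inte S)$, and while the "missing ball" reformulation makes this plausible (a ball missing $S$ misses the smaller set $T$), one must be careful that the ball in question is indeed anchored so that $x\in\bdry T$, which requires $x\in T$, i.e. $x\in\clo(\inte S)$ — true since $x\in\bdry(\inte S)\subseteq\clo(\inte S)$. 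Packaging this cleanly, together with confirming that $T$ epi-Lipschitz with compact boundary is exactly the hypothesis needed to invoke \cite[Corollary 3.12]{NT1} on $T$, and that the resulting uniform radius $r^*$ works simultaneously on both parts of $\bdry S$, is where the argument needs to be written out carefully; the rest is bookkeeping. A clean way to organize the write-up is to first record as a lemma the two facts $\inte T=\inte S$, $\bdry T=\bdry(\inte S)$, and $N^P_S(x)\subseteq N^P_T(x)$ for $x\in\clo(\inte S)$, then deduce Theorem \ref{th3} in a few lines.
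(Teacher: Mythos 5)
Your reduction to $T:=\clo(\inte S)$ and the invocation of \cite[Corollary 3.12]{NT1} match the paper's first step exactly, and the "only if" direction and the treatment of points $x\in(\bdry S)\setminus\bdry(\inte S)$ are fine. But there is a genuine gap in your handling of points $x\in\bdry(\inte S)$. You argue: $N^P_S(x)\subseteq N^P_T(x)$, and $\rho$-prox-regularity of $T$ realizes $\zeta$ by a $\rho$-sphere. That realization, however, only gives $B\bigl(x+\rho\tfrac{\zeta}{\|\zeta\|};\rho\bigr)\cap T=\emptyset$. To conclude that $\zeta$ is realized by a sphere \emph{as a proximal normal to $S$}, you need that ball to miss all of $S=T\cup\bigl[(\bdry S)\setminus\bdry(\inte S)\bigr]$, and nothing in your argument prevents the "thin" part $(\bdry S)\setminus\bdry(\inte S)$ from entering the realizing ball — indeed it could accumulate on $\bdry(\inte S)$ arbitrarily close to $x$. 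You correctly sense that the interface between the two boundary pieces is the obstacle, but you misdiagnose it as a question about anchoring the ball and the inclusion of normal cones (which you resolve), rather than about the ball avoiding the part of $S$ that $T$ does not see. No choice of $r^*=\min\{\rho,r'\}$ fixes this.

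The missing idea is the paper's Claim: $d\bigl(\bdry(\clo(\inte S)),(\bdry S)\setminus\bdry(\inte S)\bigr)=r_S>0$. This is where the hypotheses you never actually use in your second step — compactness of $\bdry(\clo(\inte S))$ and the epi-Lipschitz property of $\clo(\inte S)$ — do their work: if the distance were zero, one extracts $y_n\in(\bdry S)\setminus\bdry(\inte S)$ converging to some $\bar x\in\bdry(\inte S)$; at each $y_n$ the set $S$ is not epi-Lipschitz (Proposition \ref{prop2}$(iii)$), so the Clarke normal cone is not pointed, the extended condition upgrades this to a unit vector $\zeta_n\in N^P_S(y_n)\cap-N^P_S(y_n)$ realized by $r'$-spheres, and passing to the limit contradicts pointedness of $N_{\clo(\inte S)}(\bar x)$. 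With $r_S>0$ in hand, the paper takes $r=\min\{\rho,r',\tfrac{r_S}{4}\}$ so that the realizing ball at $x\in\bdry(\clo(\inte S))$ is contained in $B(x;\tfrac{r_S}{2})$ and therefore cannot meet $(\bdry S)\setminus\bdry(\inte S)$; combined with the disjoint decomposition of $S$ (Proposition \ref{prop1}$(ii)$) this closes the argument. Your proposal needs this separation lemma, or an equivalent device, to be a proof.
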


The details of the proofs of Theorems \ref{th1}, \ref{th2} and \ref{th3} will be presented in the next section after giving a brief description of the notation used in this paper, and providing some auxiliary results.

 \section{Proofs of the main results} We denote by $\|\cdot\|$, $\<\cdot,\cdot\>$, $B$ and $\bar{B}$, the Euclidean norm, the usual inner product, the open unit ball and the closed unit ball, respectively. For $\rho>0$ and $x\in\R^n$, we set $B(x;\rho):= x + \rho B$ and $\bar{B}(x;\rho):= x + \rho \bar{B}$. For a set $S\subset\R^n$, $S^c$, int\,$S$, $\textnormal{bdry}\,S$ and $\textnormal{cl}\,S$ are the complement (with respect to $\R^n$), the interior, the boundary and the closure of $S$, respectively.  The closed segment joining two points $x$ and $y$ in $\R^n$ is denoted by $[x,y]$. The distance from a point $x$ to a set $S$ is denoted by $d_S(x)$. We also denote by $\textnormal{proj}\,_S(x)$ the set of closest points in $S$ to $x$, that is, the set of points $s\in S$ which satisfy $d_A(x)=\|s-x\|$. For $A$ and $B$ two subsets of $\R^n$, $d(A,B)$ denotes the distance between $A$ and $B$, that is, $$d(A,B):=\inf\{\|a-b\| : (a,b)\in A\times B\}.$$
 
 For $S\subset \R^n$  closed, the following proposition provides some useful properties of the set $\clo (\inte S)$.
 
 \begin{proposition}\label{prop1} Let $S\subset \R^n$ be closed. Then we have the following$\sp:$
 \begin{enumerate}[$(i)$]
 \item $\clo (\inte S)\subset S$, $\inte(\clo (\inte S))=\inte S$, and $$\bdry(\clo (\inte S))=\bdry(\inte S)\subset \bdry S.$$
 \item $S=[\clo(\inte S)]\cup [(\bdry S)\setminus \bdry(\inte S)]$ with $$[\clo(\inte S)]\cap [(\bdry S)\setminus \bdry(\inte S)]=\emptyset.$$
  \item If $S=\clo O$, where $O$ is open, then $S$ is regular closed.
 \item If for $r>0$, $S$ satisfies the exterior $r$-sphere condition, then $\clo (\inte S)$ also  satisfies the exterior $r$-sphere condition.
 \end{enumerate}
 \end{proposition}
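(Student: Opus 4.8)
The plan is to dispatch the four items essentially independently, since they are topological/definitional in nature and (i)--(iii) require no sphere conditions at all. First I would prove (i). The inclusion $\clo(\inte S)\subset S$ holds because $\inte S\subset S$ and $S$ is closed. For $\inte(\clo(\inte S))=\inte S$: the inclusion $\supseteq$ is immediate since $\inte S$ is an open subset of $\clo(\inte S)$; for $\subseteq$, if $x\in\inte(\clo(\inte S))$ then some ball $B(x;\rho)\subset\clo(\inte S)$, and I must upgrade this to $B(x;\rho)\subset\inte S$ — the standard argument is that an open set contained in the closure of another set $A$ is already contained in $A$ when $A$ is open (indeed $B(x;\rho)\cap\inte S$ is dense in $B(x;\rho)$, hence every point of $B(x;\rho)$ is a limit of interior points but also interior to $S$ by openness). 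Then $\bdry(\clo(\inte S))=\clo(\clo(\inte S))\setminus\inte(\clo(\inte S))=\clo(\inte S)\setminus\inte S=\bdry(\inte S)$, using the definition of boundary and the identity just proved; and $\bdry(\inte S)=\clo(\inte S)\setminus\inte S\subset S\setminus\inte S=\bdry S$, giving the claimed chain.

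For (ii): write $S=\inte S\,\cup\,\bdry S$ (disjointly), and since $\inte S=\inte S\subset\clo(\inte S)$ and $\bdry(\inte S)\subset\clo(\inte S)$ as well (being part of the closure), one has $\clo(\inte S)=\inte S\cup\bdry(\inte S)$. Decomposing $\bdry S=\bdry(\inte S)\,\cup\,[(\bdry S)\setminus\bdry(\inte S)]$ disjointly and substituting gives $S=[\inte S\cup\bdry(\inte S)]\cup[(\bdry S)\setminus\bdry(\inte S)]=\clo(\inte S)\cup[(\bdry S)\setminus\bdry(\inte S)]$. Disjointness of the two bracketed pieces follows because the second piece is disjoint from $\bdry(\inte S)$ by construction and disjoint from $\inte S$ since it lies in $\bdry S$; hence it is disjoint from their union $\clo(\inte S)$. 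For (iii): if $S=\clo O$ with $O$ open, then $O\subset\inte S$, so $\clo O\subset\clo(\inte S)\subset S=\clo O$, forcing equality, i.e.\ $S=\clo(\inte S)$, which is regular closedness.

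The one item with genuine content is (iv). Let $T:=\clo(\inte S)$ and fix $x\in\bdry T$. By (i), $\bdry T=\bdry(\inte S)\subset\bdry S$, so the exterior $r$-sphere condition of $S$ furnishes $0\neq\zeta\in N_S^P(x)$ realized by an $r$-sphere, i.e.\ $B\bigl(x+r\tfrac{\zeta}{\|\zeta\|};r\bigr)\cap S=\emptyset$. Since $T\subset S$ by (i), a fortiori $B\bigl(x+r\tfrac{\zeta}{\|\zeta\|};r\bigr)\cap T=\emptyset$, and this ball-emptiness is exactly the statement that $\zeta$ (with the appropriate $\sigma=\tfrac{1}{2r}$) lies in $N_T^P(x)$ and is realized by an $r$-sphere. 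As $x\in\bdry T$ was arbitrary, $T$ satisfies the exterior $r$-sphere condition. The only subtlety here — and the step I would flag as needing care — is ensuring the point $x$ at which we test the sphere condition for $T$ genuinely lies in $\bdry S$ so that the hypothesis applies; this is precisely what (i) guarantees, which is why (iv) must be placed after (i). No other obstacle is expected; the argument is a short monotonicity observation once the boundary identification of (i) is in hand.
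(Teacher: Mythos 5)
Your proof is correct and, for item $(iv)$ --- the only item the paper actually proves (it dismisses $(i)$--$(iii)$ as ``well known in metric topology'') --- it is essentially the paper's argument: the boundary inclusion from $(i)$ places $x$ in $\bdry S$, and monotonicity under $\clo(\inte S)\subset S$ transfers the realized proximal normal; you phrase this via ball-emptiness while the paper writes the proximal normal inequality, which are equivalent formulations. One caution on $(i)$: the general lemma you invoke --- that an open set contained in $\clo A$ with $A$ open must be contained in $A$ --- is false as stated (take $A=(0,1)\cup(1,2)$ and the open set $(1/2,3/2)$, which contains the point $1\notin A$); what actually saves the argument here is that $B(x;\rho)\subset\clo(\inte S)\subset S$ and an open subset of $S$ lies in $\inte S$, and you should state that directly rather than the density remark.
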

 \begin{proof} The properties $(i)$-$(iii)$ are well known in metric topology. For $(iv)$, let $r>0$ and assume that $S$ satisfies the exterior $r$-sphere condition. Let $x\in\bdry (\clo (\inte S))$. By $(i)$ we have that $x\in \bdry S$, and hence there exists $0\not=\zeta\in N_S^P(x)$ such that $\zeta$ is realized by an $r$-sphere. This gives using the proximal normal inequality that $$\left\<\frac{\zeta}{\|\zeta\|},s-x\right\>\leq \frac{1}{2r}\|s-x\|^2,\;\;\forall s\in S.$$
 Since $\clo (\inte S)\subset S$, we obtain that $$\left\<\frac{\zeta}{\|\zeta\|},s-x\right\>\leq \frac{1}{2r}\|s-x\|^2,\;\;\forall s\in\clo (\inte S).$$
 Therefore, $\zeta\in N^P_{\clo (\inte S)}(x)$ and is realized by an $r$-sphere. \end{proof}
 
 \begin{example}\label{ex2} The converse of Proposition \ref{prop1}$(iv)$ is not necessarily true. Indeed, consider in $\R^2$, $S:=\bar{B}(0;1)\cup [(1,0),(2,0)]$. Clearly $S$ does not satisfy the exterior $r$-sphere condition for any $r>0$, but $\clo (\inte S)=\bar{B}(0;1)$ satisfies the exterior $r$-sphere condition for any $r>0$. 
 \end{example}
 
 In the following proposition we list some well known characterizations and consequences of prox-regular and epi-Lipshitz properties. For the proofs, see \cite{clarkeold,prt,rockwet}.
 
 \begin{proposition}\label{prop2}   Let $S\subset \R^n$ be closed. Then we have the following$\sp:$
 \begin{enumerate}[$(i)$]
 \item If $S$ is prox-regular, then for each $x\in\bdry S$ we have \begin{equation*}N_S^P(x)=N_S^L(x)=N_S(x), \end{equation*}
where $N_S^L(x)$ and $N_S(x)$ denotes the \textnormal{Mordukhovich} $($or \textnormal{limiting}$)$ and the  \textnormal{Clarke} normal cones to $S$ at $x$, respectively.
\item For $x\in\bdry S$, $S$ is epi-Lipschitz at $x$ if and only if $N_S(x)$ is pointed, that is, $N_S(x)\cap-N_S(x)=\{0\}$.
\item If for $x\in\bdry S$, $S$ is epi-Lipschitz at $x$, then $x\in\clo(\inte S)$.
  \end{enumerate}
 
 \end{proposition}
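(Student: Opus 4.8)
These three statements are standard facts of nonsmooth analysis, and my plan is to assemble each from the definitions of the proximal, limiting and Clarke normal cones, together with the uniform $r$-sphere realization furnished by prox-regularity and Rockafellar's tangential characterization of the epi-Lipschitz property (the references the authors cite). For part $(i)$, I would first note that the inclusions $N_S^P(x)\subset N_S^L(x)\subset N_S(x)$ hold for \emph{every} closed set, directly from the construction of $N_S^L(x)$ as the sequential outer limit of $N_S^P(\cdot)$ and of $N_S(x)$ as $\clo\conv N_S^L(x)$; thus only the reverse inclusion uses prox-regularity. I would establish $N_S^L(x)\subset N_S^P(x)$ by a limiting argument: take $0\not=\zeta\in N_S^L(x)$ and write $\zeta=\lim_i\zeta_i$ with $\zeta_i\in N_S^P(x_i)$, $x_i\to x$ in $S$; for $i$ large the points $x_i$ lie in $\bdry S$ and $\zeta_i\not=0$, so $r$-prox-regularity yields the estimate $\left\<\frac{\zeta_i}{\|\zeta_i\|},s-x_i\right\>\le\frac{1}{2r}\|s-x_i\|^2$ for all $s\in S$. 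Passing to the limit (using $\|\zeta_i\|\to\|\zeta\|\not=0$) gives the same inequality at $x$ with the same constant, i.e. $\zeta$ is realized by an $r$-sphere, so $\zeta\in N_S^P(x)$. Since $N_S^P(x)$ is already a convex cone (as recorded in the introduction) and now coincides with the closed set $N_S^L(x)$, it is closed and convex, whence $N_S(x)=\clo\conv N_S^L(x)=N_S^P(x)$, closing the chain of equalities.

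For part $(ii)$, I would invoke the characterization recalled in the introduction: $S$ is epi-Lipschitz at $x$ if and only if the Clarke tangent cone $T_S(x)$ has nonempty interior. The Clarke normal and tangent cones are mutually polar closed convex cones, $N_S(x)=T_S(x)^\circ$ and $T_S(x)=N_S(x)^\circ$, so the statement reduces to the elementary convex-analysis equivalence: a closed convex cone $K\subset\R^n$ has nonempty interior if and only if its polar $K^\circ$ is pointed. For this I would compute the lineality space $K^\circ\cap(-K^\circ)=\{v:\<v,w\>=0\ \forall w\in K\}=(\span K)^\perp$, so pointedness of $K^\circ$ is equivalent to $\span K=\R^n$, which for a closed convex cone is equivalent to $\inte K\not=\emptyset$. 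Applying this with $K=T_S(x)$ gives $\inte T_S(x)\not=\emptyset\iff N_S(x)$ pointed, as claimed.

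For part $(iii)$, I would use the local structure supplied by the epi-Lipschitz property directly: after an orthogonal change of coordinates, $S$ coincides on some neighborhood $U$ of $x$ with the epigraph $\{(z,t):t\ge f(z)\}$ of a Lipschitz, hence continuous, function $f$. For such an epigraph the interior is the strict epigraph $\{(z,t):t>f(z)\}$, and every graph point $(z_0,f(z_0))$ is the limit of interior points $(z_0,f(z_0)+\varepsilon)$ as $\varepsilon\downarrow 0$; thus the whole graph, and in particular the image of $x$, lies in the closure of the interior. Since orthogonal maps preserve both interior and closure, and since $\inte(S\cap U)\subset\inte S$, I conclude $x\in\clo(\inte S)$.

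The main obstacle is the reverse inclusion in part $(i)$: everything hinges on the fact that $r$-prox-regularity provides a \emph{uniform} sphere realization, with the same radius $r$ at every boundary base point, so that the proximal normal inequality survives the passage to the limit. Without this uniformity the limiting cone could be strictly larger than the proximal cone, and the chain of equalities would fail. Parts $(ii)$ and $(iii)$ are then routine once the cited tangential characterization and the elementary polarity lemma are in hand.
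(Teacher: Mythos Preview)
Your arguments for all three parts are correct and constitute the standard proofs one finds in the references \cite{clarkeold,prt,rockwet}. Note, however, that the paper itself does not give any proof of this proposition: it is stated as a collection of ``well known characterizations and consequences'' and the reader is referred to \cite{clarkeold,prt,rockwet} for the details. In that sense you have supplied what the paper deliberately omits. Your sketch for $(i)$, in particular the passage to the limit in the proximal normal inequality using the \emph{uniform} radius $r$, is exactly the mechanism behind \cite[Theorem 4.8]{prt} (proximal smoothness implies normal regularity); your polarity argument for $(ii)$ is the content of \cite[Theorem 7.3.1]{clarkeold}; and your epigraph argument for $(iii)$ is immediate from Rockafellar's original definition in \cite{rock}. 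Nothing is missing.
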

 
 \subsection{Proof of Theorem \ref{th1}}   Let $S\subset \R^n$ be a closed set such that $\clo(\inte S)$ satisfies the exterior $r$-sphere condition for some $r>0$. Since by Proposition \ref{prop1}$(iii)$ we have that $\clo(\inte S)$ is regular closed, we get that $(\inte(\clo(\inte S))^c=(\inte S)^c$ satisfies the interior $r$-sphere condition. Hence by  \cite[Conjecture 1.2]{nst5} we obtain that $(\inte S)^c$ is the union of closed $\frac{r}{2}$-balls. 
 
 We proceed to prove the \enquote{In addition} part of Theorem \ref{th1}. We assume that $S$ is regular closed, and we consider $r'<\frac{r}{2}$.  Let $x\in S^c$. Since $S^c\subset (\inte S)^c$ and $(\inte S)^c$ is the union of closed $\frac{r}{2}$-balls, there exists $y_x\in(\inte S)^c$ such that \begin{equation} \label{f1} x\in \bar{B}\left(y_x;\frac{r}{2}\right)\subset (\inte S)^c.\end{equation} 
 If $\|y_x-x\|<r'$, then using \eqref{f1} we get that $$ x\in \bar{B}(y_x;r')\subset  {B}\left(y_x;\frac{r}{2}\right)\subset\bar{B}\left(y_x;\frac{r}{2}\right) \subset (\inte S)^c.$$
This gives that $$x\in \bar{B}(y_x;r')\subset \inte((\inte S)^c)=(\clo(\inte S))^c=S^c.$$
 Now we assume that $\|y_x-x\|\geq r'$. Then by \eqref{f1} and since $x\in S^c$, we have that \begin{eqnarray*} x\in \bar{B}\left(x+r'\frac{y_x-x}{\|y_x-x\|};r'\right)& \subset & {B}\left(y_x;\|y_x-x\|\right)\cup\{x\} \\&\subset& {B}\left(y_x;\frac{r}{2}\right)\cup\{x\}\\&\subset& \inte((\inte S)^c)\cup S^c=S^c.\end{eqnarray*}
 Therefore, $S^c$ is the union of closed $r'$-balls. This terminated the proof of the  \enquote{In addition} part, and hence the proof of Theorem \ref{th1}. \hfill $\Box$
 
  In the following example, we prove that the \enquote{In addition} part of Theorem \ref{th1} is not necessarily true  when $S$ is not regular closed.
 
 \begin{example}\label{ex1} Let $S:=\{(x,e^x) : x\in\R\}\cup\{(x,-e^{x}) : x\in\R\}$. Clearly $S$ is not regular closed, and satisfies the exterior $ \frac{3\sqrt{3}}{2}$-sphere condition. On the other hand, $S^c$ is not the union of closed $r$-balls for {\it any} $r>0$. Note that $S$ does not satisfy the extended exterior $r$-sphere condition for any $r>0$.
 
 \end{example}
 
  \subsection{Proof of Theorem \ref{th2}} Let $S\subset \R^n$ be a closed set  satisfying the extended exterior $r$-sphere condition for some $r>0$. We consider $x\in S^c$. Let $s_0\in\proj_S(x)$ and let $r_0:=\|x-s_0\|$. Clearly we have $s_0\in\bdry S$ and $s_0\not=x$, and hence $r_0\not=0$. Moreover, we have \begin{equation}\label{beforecase} B(x;r_0)\cap S=\emptyset. \end{equation}
There are three cases to consider.\vspace{0.1cm} \\
\underline{Case 1}: $r_0> \frac{r}{2}$.\vspace{0.1cm} \\
Then using \eqref{beforecase}, we obtain that \begin{equation*}\label{case1} x\in \bar{B}\left(x;\frac{r}{2}\right)\subset B(x;r_0)\subset S^c. \vspace{0.15cm}\end{equation*}
\underline{Case 2}: $r_0\leq \frac{r}{2}$ and $s_0\not \in \bdry(\inte S)$. \vspace{0.1cm} \\
Then the proximal normal vector $\zeta_0:=x-s_0\in N_S^P(s_0)$ is realized by an $r$-sphere. This gives that \begin{equation}\label{case2.1} B\left(s_0+r\frac{\zeta_0}{\|\zeta_0\|};r\right)\subset S^c. \end{equation}
We claim that \begin{equation}\label{case2.2} \bar{B}\left(x+\frac{r}{2}\frac{\zeta_0}{\|\zeta_0\|};\frac{r}{2}\right)\subset B\left(s_0+r\frac{\zeta_0}{\|\zeta_0\|};r\right).\end{equation}
Indeed, for $y\in \bar{B}\left(x+\frac{r}{2}\frac{\zeta_0}{\|\zeta_0\|};\frac{r}{2}\right)$ we have 
\begin{eqnarray*} \left\|y-s_0-r\frac{\zeta_0}{\|\zeta_0\|}\right\| &=&   \left\|\left(y-x-\frac{r}{2}\frac{\zeta_0}{\|\zeta_0\|}\right)+\left(x-s_0-\frac{r}{2}\frac{\zeta_0}{\|\zeta_0\|}\right)\right\| \\ & \leq & \left\|y-x-\frac{r}{2}\frac{\zeta_0}{\|\zeta_0\|}\right\|+\left\|x-s_0-\frac{r}{2}\frac{\zeta_0}{\|\zeta_0\|}\right\| \\&\leq& \frac{r}{2} + \left\|r_0\frac{\zeta_0}{\|\zeta_0\|}-\frac{r}{2}\frac{\zeta_0}{\|\zeta_0\|}\right\| \\ &=& \frac{r}{2} + \frac{r}{2}-r_0 = r-r_0 < r.\end{eqnarray*}
Now combining \eqref{case2.1} and \eqref{case2.2}, we obtain that $$x\in \bar{B}\left(x+\frac{r}{2}\frac{\zeta_0}{\|\zeta_0\|};\frac{r}{2}\right)\subset  B\left(s_0+r\frac{\zeta_0}{\|\zeta_0\|};r\right)\subset S^c.\vspace{0.15cm}$$
\underline{Case 3}: $r_0\leq\frac{r}{2}$ and $s_0\in \bdry(\inte S)$.  \vspace{0.1cm} \\
Then for every $\e>0$, we have $B(s_0;\e)\cap \inte S\not=\emptyset.$ Let $z_{\e}\in B(s_0;\e)\cap \inte S$. We denote by $\xi_{\e}:=\frac{z_\e-x}{\|z_\e-x\|}$ and we consider $s_{\e}\in [z_\e,x]\cap \bdry S$, where this latter intersection is nonempty since $z_\e\in\inte S$ and $x\in S^c$. Let $\zeta_\e\in N_S^P(s_\e)$ be the unit vector realized by an $r$-sphere. Hence, for $y_\e:=s_\e+r{\zeta_\e}$, we have \begin{equation}\label{case3.1} B\left(y_\e;r\right)\cap S=\emptyset.\end{equation}
If $y_\e=x$, then $$x\in\bar{B}\left(x;\frac{r}{2}\right)\subset B(y_\e;r)\subset S^c.$$
If $0<\|y_\e-x\|<r$, then for $w\in  \bar{B}\left(x+\frac{r}{2}\frac{y_\e-x}{\|y_\e-x\|};\frac{r}{2}\right)$, we have 
\begin{eqnarray*}\|w-y_\e\| &\leq& \left\|w-x-\frac{r}{2}\frac{y_\e-x}{\|y_\e-x\|}  \right\| + \left\|x+\frac{r}{2}\frac{y_\e-x}{\|y_\e-x\|}-y_\e  \right\| \\ &\leq& \frac{r}{2} + \left\|\frac{r}{2}\frac{y_\e-x}{\|y_\e-x\|}-(y_\e-x)\right\| \\ &=& \frac{r}{2}+\left|\frac{r}{2}-\|y_\e-x\| \right| < r.  \end{eqnarray*}
This gives using \eqref{case3.1} that  \begin{equation*}\label{case3.2} x\in \bar{B}\left(x+\frac{r}{2}\frac{y_\e-x}{\|y_\e-x\|};\frac{r}{2}\right)\subset B\left(y_\e;r\right)\subset S^c.\end{equation*}
\begin{figure}[t]
\centering
\includegraphics[scale=0.53]{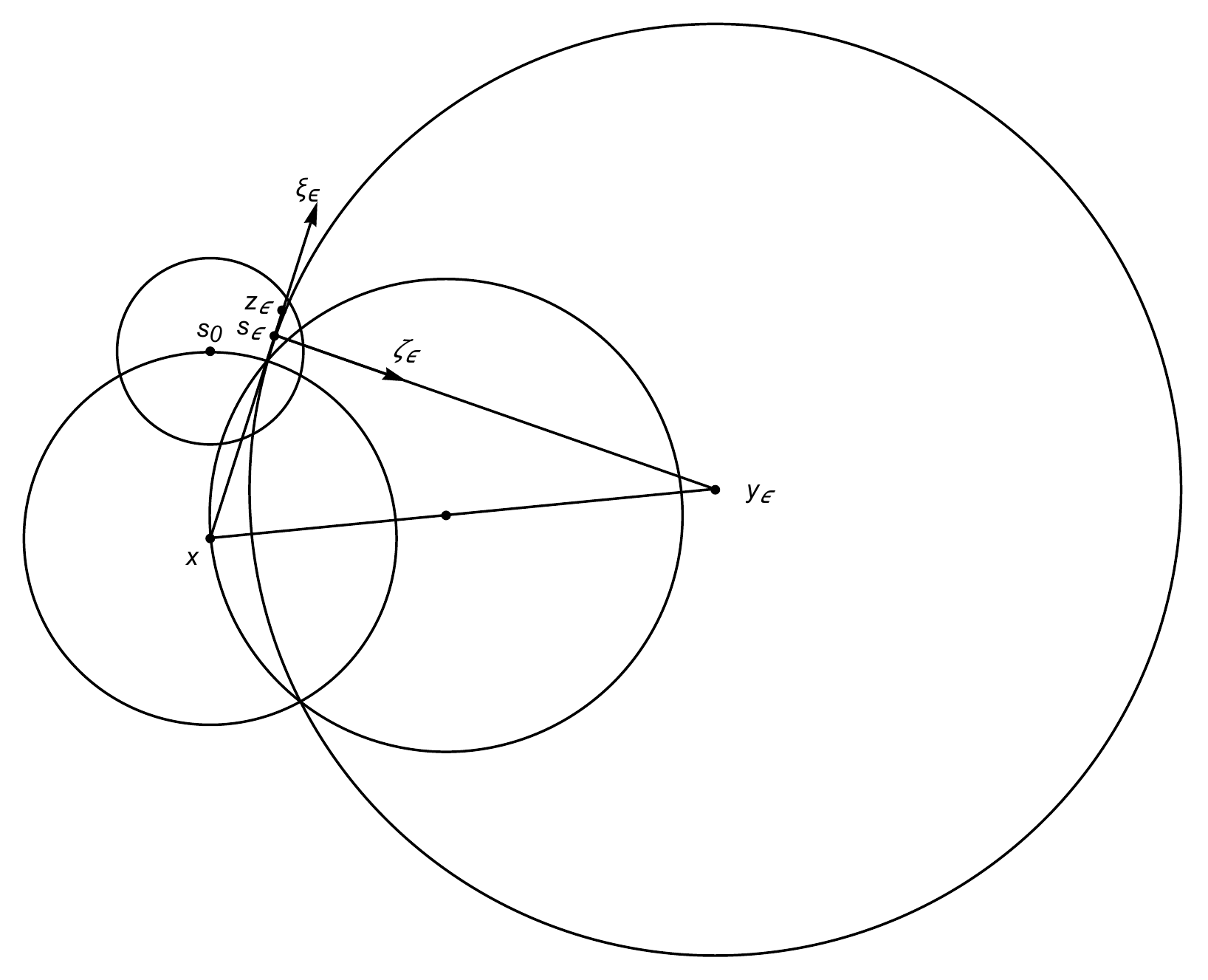}
\caption{\label{Fig1} Case 3: $\|y_\e-x \|\geq r$}
\end{figure} 
\hspace{-0.28cm} Now we assume that $\|y_\e-x\|\geq r$ and we denote by \begin{equation} \label{case3.4} r_\e:=\frac{r_0^2\|y_\e-x\|}{\|y_\e-x\|^2+r_0^2-r^2}>0.\end{equation}
\begin{nclaim}{1} \label{claim1} $B\left(x+r_\e\frac{y_\e-x}{\|y_\e-x\|}; r_\e \right) \subset B(x;r_0)\cup B(y_\e;r).$
\end{nclaim}
To prove this claim, let $w\in B\left(x+r_\e\frac{y_\e-x}{\|y_\e-x\|}; r_\e \right)$ and assume that $w\not\in B(x;r_0).$ Then \begin{equation}\label{case3.3}\|w-x\|<2r_{\e}\left\<\frac{w-x}{\|w-x\|},\frac{y_\e-x}{\|y_\e-x\|}\right\>.\end{equation}
We have \begin{eqnarray*}\|w-y_\e\|^2 &=& \|(w-x)-(y_\e-x)\|^2\\&=&  \|w-x\|^2+\|y_\e-x\|^2-2\<w-x,y_\e-x\> \\&\overset{ \eqref{case3.3}}{<}& \|w-x\|^2+\|y_\e-x\|^2 - \frac{\|y_\e-x\|}{r_\e}\|w-x\|^2\\&=& \|y_\e-x\|^2  + \|w-x\|^2\left(1-\frac{\|y_\e-x\|}{r_\e}\right)\\&\overset{\eqref{case3.4}}{=}&  \|y_\e-x\|^2  + \|w-x\|^2\left(1-\frac{\|y_\e-x\|^2+r_0^2-r^2}{r_0^2}\right) \\&=&   \|y_\e-x\|^2  + \underbrace{\|w-x\|^2}_{\geq r_0^2} \underbrace{\left(\frac{r^2-\|y_\e-x\|^2}{r_0^2}\right)}_{\leq 0}\\&\leq& \|y_\e-x\|^2 +r^2-\|y_\e-x\|^2=r^2.\end{eqnarray*}
This gives that $w\in B(y_\e,r)$ which terminates the proof of the claim.
\begin{nclaim}{2} \label{claim2}  $\left\<{\xi_\e},{\zeta_\e}\right\><\frac{\e}{2r}$.
\end{nclaim}
To prove this claim, first we remark that since $z_\e\in\inte S$, we deduce from \eqref{case3.1} that $z_\e\not\in \bar{B}(y_\e;r)=\bar{B}\left(s_\e+r{\zeta_\e};r\right)$. Hence, using the equality $z_\e-s_\e=\|z_\e-s_\e\|{\xi_\e}$, we obtain that  $$\|z_\e-s_\e\|>2r\left\<{\xi_\e},{\zeta_\e}\right\>. $$
Therefore, using that $s_0\in\proj_S(x)$, we get that  \begin{eqnarray*}\left\<{\xi_\e},{\zeta_\e}\right\>< \frac{1}{2r}\|z_\e-s_\e\|&=& \frac{1}{2r}\left(\|z_\e-x\|-\|s_\e-x\|\right)\\&\leq&  \frac{1}{2r}\left(\|z_\e-s_0\|+\underbrace{\|s_0-x\|-\|s_\e-x\|}_{\leq0}\right)\\&\leq& \frac{1}{2r}\|z_\e-s_0\|<\frac{\e}{2r}. \end{eqnarray*}
The proof of the claim is terminated.
\begin{nclaim}{3} \label{claim3} If $\e\in\left(0,\frac{r_0^3}{4r^2}\right]$ then $r_\e> \frac{r}{2}$.
\end{nclaim}
To prove the claim, first we remark using \eqref{case3.4} that $r_\e> \frac{r}{2}$ is equivalent to \begin{equation} \label{case3.5} P(\|y_\e-x\|):=r\|y_\e-x\|^2-2r^2_0\|y_\e-x\|+ r(r_0^2-r^2)<0.\end{equation}
Since the discriminant $\Delta'$ of $P(\|y_\e-x\|)$ is $(r^2-r_0^2)^2+r^2r_0^2>0$ and the product of the roots is $r_0^2-r^2<0$,  we conclude that \eqref{case3.5}  is equivalent to \begin{equation} \label{case3.7}  \|y_\e-x\|<\frac{r_0^2+\sqrt{\Delta'}}{r}. \end{equation}
We have \begin{eqnarray}  \nonumber \|y_\e-x\|^2 &=&  \|y_\e-s_\e\|^2+  \|s_\e-x\|^2+2\<y_\e-s_\e,s_\e-x\> \\ &=& \nonumber r^2+  \|s_\e-x\|^2 +2r\|s_\e-x\|\left\<{\zeta_\e},{\xi_\e}\right\>\\&\overset{\textnormal{Claim}\,\ref{claim2}}{<} & \nonumber r^2 + \|s_\e-x\|^2 +\e\|s_\e-x\| \\ &\overset{s_\e\in[z_\e,x]}{\leq}& \nonumber r^2 + \|z_\e-x\|^2+\e\|z_\e-x\| \\ &\leq & \nonumber   r^2 + (\|z_\e-s_0\|+\|s_0-x\|)^2 +\e(\|z_\e-s_0\|+\|s_0-x\|) \\ &\leq&\nonumber r^2+(\e+r_0)^2+\e(\e+r_0) \\ &=& \nonumber r^2 + r_0^2+2\e^2+3\e r_0 \\ &\overset{\e\in \left(0,\frac{r_0^3}{4r^2}\right]}{\leq}&\nonumber r^2 + r_0^2 + \frac{r_0^6}{8r^4}+\frac{3r_0^4}{4r^2}\\ &=&\nonumber r^2 + r_0^2 +\frac{r_0^4}{r^2}\left(\frac{r_0^2}{8r^2}+\frac{3}{4}\right)\\ &\overset{r_0\leq\frac{r}{2}}{<}& r^2+r_0^2+ \frac{r_0^4}{r^2}. \label{case3.6} \end{eqnarray}
On the other hand, we have 
 \begin{eqnarray*}(r_0^2+\sqrt{\Delta'})^2 &=&   2r_0^4+r^4-r^2r_0^2+2r_0^2\sqrt{\left(r^2-\frac{1}{2}r_0^2\right)^2+\frac{3r_0^4}{4}}  \\ &>&  2r_0^4+r^4-r^2r_0^2 + 2r_0^2 \left(r^2-\frac{1}{2}r_0^2\right)\\  &=& r_0^4+r^4+r^2r_0^2.\end{eqnarray*}
This gives that $$\left(\frac{r_0^2+\sqrt{\Delta'} }{r}\right)^2 > r^2+r_0^2+ \frac{r_0^4}{r^2}.$$
Combining this latter with \eqref{case3.6}, we obtain inequality \eqref{case3.7}, and this terminates the proof of Claim \ref{claim3}.

Now we fix $\e\in\left(0,\frac{r_0^3}{4r^2}\right]$. By Claim \ref{claim3} we have that $\frac{r}{2}<r_\e$. Hence, using Claim \ref{claim1}, \eqref{beforecase} and \eqref{case3.1}, we get that \begin{eqnarray*} x\in\bar{B}\left(x+\frac{r}{2}\frac{y_\e-x}{\|y_\e-x\|}; \frac{r}{2}\right) &\subset&   {B}\left(x+r_\e\frac{y_\e-x}{\|y_\e-x\|}; r_\e\right)\cup\{x\}\\ &\subset&  B(x;r_0)\cup B(y_\e;r)\subset S^c. \end{eqnarray*}

In the three cases above, we have shown that $x$ belongs to a closed $\frac{r}{2}$-ball included in $S^c$. The proof of Theorem \ref{th2} is terminated. \hfill $\Box$

In the following example, we prove that the extended exterior $r$-sphere condition assumption of Theorem \ref{th2} cannot be replaced by the exterior $r$-sphere condition.
\begin{example}\label{ex3} In Figure \ref{Fig1}, $S$ is the set that consists of both curves and the dashed region between them. The set $S$ satisfies the exterior $r$-sphere condition, but it does not satisfy the extended exterior $r$-sphere condition since some of the boundary points on the right-hand side have two normal vectors where only one of them is realized by an $r$-sphere. Clearly, $S^c$ is not the union of $r'$-balls for any $r'>0$.
\end{example}
\begin{figure}[h!]
\centering
\includegraphics[scale=1.6]{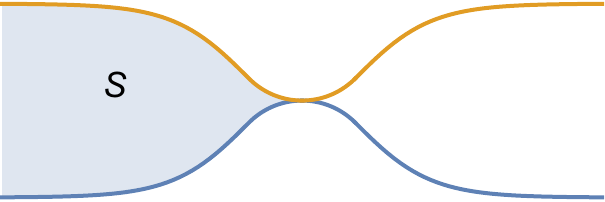}
\caption{\label{Fig1} Example \ref{ex3}}
\end{figure}

\begin{remark} From Theorem \ref{th2}, we deduce that if $S$ is regular closed and satisfies the exterior $r$-sphere condition for some $r>0$, then $S^c$ is the union of closed $\frac{r}{2}$-balls, which is stronger than the \enquote{In addition} part of Theorem \ref{th1}. Of course we can adjust the proof of the \enquote{In addition} part of Theorem \ref{th1} and obtain the radius $\frac{r}{2}$ instead of $r'<\frac{r}{2}$. We did not do this to not complicate the proof of Theorem \ref{th1}, and to show how its proof, when $r'<\frac{r}{2}$, is a {\it direct} consequence of the union of closed balls conjecture \cite[Conjecture 1.2]{nst5}. Note that since $\frac{r}{2}< \frac{nr}{2\sqrt{n^2-1}}$ for all $n\geq2$, the same techniques used in the proof of Theorem \ref{th1} combined with the strong version of the union of closed balls conjecture \cite[Conjecture 1.3]{nst5}, gives that $S^c$ is the union of closed $r'$-balls for any $r'<\frac{nr}{2\sqrt{n^2-1}}$, including $r'=\frac{r}{2}$. Unfortunately, this latter result cannot be confirmed since the strong version of the union of closed balls conjecture remains till now an open question. \end{remark}

\begin{remark} \label{newlastplease} The radius $\frac{r}{2}$ of Theorem \ref{th2} is the {\it largest} radius that works in {\it all} the spaces $\R^n$ for {\it any} $n\geq 2$. Indeed, assume that the radius $\frac{r}{2}$ of Theorem \ref{th2} is replaced by an $r'>0$. We claim that $r'\leq\frac{r}{2}$. Indeed, inspired by \cite[Example 2]{nst5bis}, we consider in $\R^n$ the set $S$ to be the complement of the $(n+1)$ open $r$-balls centered at $C_i$, $i=0,\dots,n$, where for $(e_i)_{i=1}^{n}$ the standard basis of $\R^n$ and $e_0:=0$, we have
\begin{eqnarray*}C_i\bp\bp\bp\bbp&:=&\bp\bp\bp\frac{-nr}{\sqrt{n(n-1)}}\left( \sqrt{\frac{i}{i+1}}\,\sp\sp e_i+\sum_{k=i+1}^{n}\frac{-1}{\sqrt{k(k+1)}}\sp\sp e_k\right)\\&=&\bp\bp\bp \frac{-nr}{\sqrt{n(n-1)}}\bbp\Bigg(\bp
0,\dots,0,\frac{\sqrt{i}}{\sqrt{i+1}}, \frac{-1}{\sqrt{(i+1)(i+2)}},
\frac{-1}{\sqrt{(i+2)(i+3)}},\dots, \frac{-1}{\sqrt{n(n+1)}}\bbp\Bigg)\bbp.\end{eqnarray*}
The set $S$ satisfies the extended exterior $r$-sphere condition (in fact, it is regular closed and satisfies the exterior $r$-sphere condition), but the largest closed balls contained in $S^c$ and containing the origin $0\in S^c$ is of radius $\frac{nr}{2\sqrt{n^2-1}}$, see
Figure \ref{newfig} for $n=2$ and $n=3$. Hence, $r'\leq \frac{nr}{2\sqrt{n^2-1}}$ for all $n\geq 2$. Taking $n \f \infty$ in this latter, we conclude that $r'\leq \frac{r}{2}$.
\begin{figure}
\centering
\includegraphics[width=60mm]{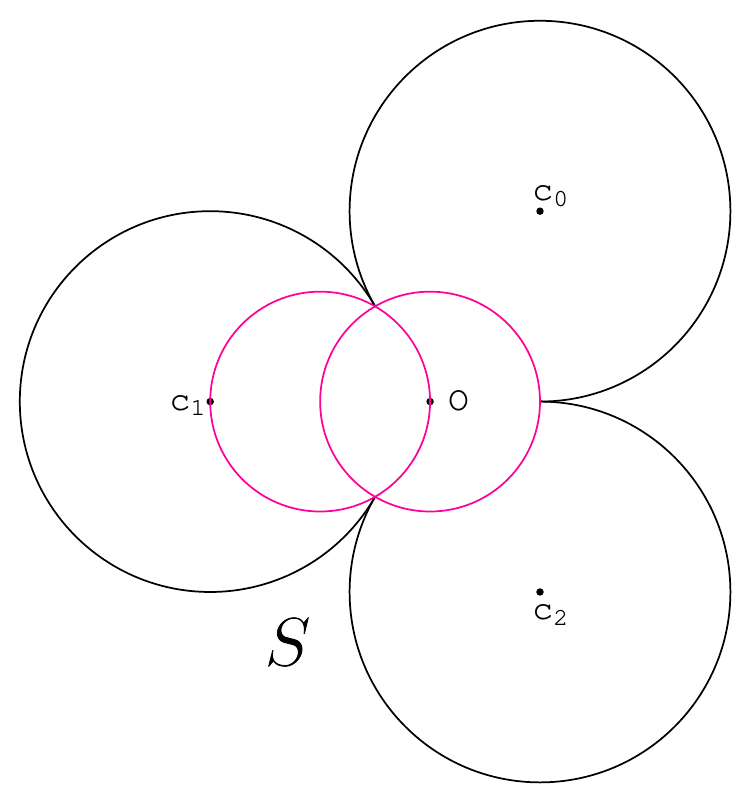}\hspace{1.3cm}
\includegraphics[width=60mm]{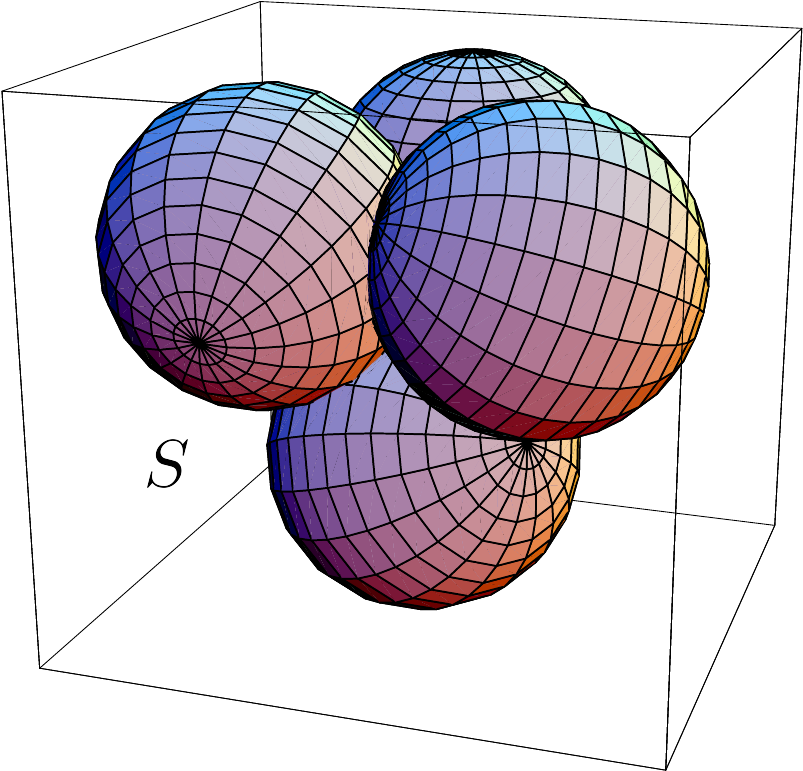}
\caption{\label{newfig} Example of Remark \ref{newlastplease}}
\vspace{0.2cm}
\end{figure}
\end{remark}

\subsection{Proof of Theorem \ref{th3}} Let $S\subset\R^n$ be a closed set such that $\clo(\inte S)$ is epi-Lipschitz and has compact boundary. Since the first implication is straightforward (take $r'=r$), we focus on the second implication. So, we assume that $S$ satisfies the extended exterior $r'$-sphere condition for some $r'>0$. Then $S$ satisfies the exterior $r'$-sphere condition, and hence by Proposition \ref{prop2}$(iv)$, $\clo(\inte S)$ satisfies the exterior $r'$-sphere condition. Now using \cite[Corollary 3.12]{NT1}, applied to $\clo(\inte S)$, we get that $\clo(\inte S)$ is $\rho$-prox-regular for some $\rho>0$. 
\begin{nclaim}{1}\label{claim4} $d(\bdry(\clo(\inte S)),(\bdry S)\setminus \bdry(\inte S))=r_S>0$.
\end{nclaim}
If not, then there exist two sequences $x_n$ and $y_n$ such that $$x_n\in \bdry(\clo(\inte S)),\;\;y_n\in (\bdry S)\setminus \bdry(\inte S)\;\;\hbox{and}\;\;\|x_n-y_n\|\leq\frac{1}{n},\;\;\forall n\geq 1.$$
Since $\clo(\inte S)$ has compact boundary, we get the existence of $\bar{x}\in \bdry(\inte S)$ and a subsequence of $y_n$, we do not relabel, such that $y_n\f\x$. Since $y_n\not\in \clo(\inte S)$, we have from Proposition \ref{prop2}$(iii)$ that $S$ is not epi-Lipschitz at $y_n$. This gives using Proposition \ref{prop2}$(ii)$ the existence of unit vector $\zeta_n$ such that \begin{equation}\label{last2} \zeta_n\in N_S(y_n)\cap -N_S(y_n). \end{equation} Now since $y_n\in (\bdry S)\setminus \bdry(\inte S)$ and $S$ satisfies the extended exterior $r'$-sphere condition, we have that $S$ is $r'$-prox-regular near $y_n$, that is, in a $(\bdry S)$-neighborhood of $y_n$. Hence, Proposition \ref{prop2}$(i)$ and \eqref{last2} yield that $$\zeta_n\in N_S^P(y_n)\cap -N_S^P(y_n).$$
Using the proximal normal inequality and the fact that $\zeta_n$ is unit and realized by an $r'$-sphere, we obtain that $(\zeta_n)_n$ has a subsequence, we do not relabel, that converges to a unit vector $\zeta$ satisfying $$\zeta\in N_S^P(\x)\cap- N_S^P(\x).$$
Since $N_S^P(\x)\subset N_{\clo(\inte S)}^P(\x)$, we deduce that $$\zeta\in N_{\clo(\inte S)}^P(\x)\cap- N_{\clo(\inte S)}^P(\x).$$
This yields using the prox-regularity of $\clo(\inte S)$ and Proposition \ref{prop2}$(i)$, that $$\zeta\in N_{\clo(\inte S)}(\x)\cap- N_{\clo(\inte S)}(\x),$$
which contradicts the fact that $\clo(\inte S)$ is epi-Lipschitz. The proof of the claim is terminated.

Now we prove that $S$ is $r$-prox-regular for $r:=\min\{\rho,r',\frac{r_S}{4}\}$. Let $x\in\bdry S$. If $x\in (\bdry S)\setminus \bdry(\clo(\inte S))$, then from the definition of the extended $r'$-sphere condition, we know that any $0\not=\zeta\in N^P_S(x)$ is realized by an $r'$-sphere. This gives that any $0\not=\zeta\in N^P_S(x)$ is realized by an $r$-sphere. Now we assume that $x\in\bdry(\clo(\inte S)).$ Having $\clo(\inte S)$ $\rho$-prox-regular, it results that for any $0\not=\zeta\in N^P_S(x)\subset N^P_{\clo(\inte S)}(x)$, we have $$B\bp\left(x+\rho\frac{\zeta}{\|\zeta\|};\rho\right)\cap \clo(\inte S)=\emptyset.$$
This gives that for any $0\not=\zeta\in N^P_S(x)$, we have \begin{equation}\label{last1} B\bp\left(x+r\frac{\zeta}{\|\zeta\|};r\right)\cap \clo(\inte S)=\emptyset. \end{equation} 
On the other hand, since $r<\frac{r_S}{4}$ and using Claim \ref{claim4}, we have for any $0\not=\zeta\in N^P_S(x)$ that $$B\bp\left(x+r\frac{\zeta}{\|\zeta\|};r\right)\subset B\left(x;\frac{r_S}{2}\right)\subset [(\bdry S)\setminus \bdry(\inte S)]^c.$$
Combining this latter with Proposition \ref{prop1}$(ii)$ and \eqref{last1}, we get for any $0\not=\zeta\in N^P_S(x)$ that $$B\bp\left(x+r\frac{\zeta}{\|\zeta\|};r\right)\cap S^c=\emptyset.$$
This terminates the proof of Theorem \ref{th3}. \hfill $\Box$


\begin{thebibliography}{00}

\bibitem{canino} A. Canino, {\em On p-convex sets and geodesics}, J. Diff. Equations 75/1 (1988) 118--157.

\bibitem{cf} P. Cannarsa, H. Frankowska: {\em Interior sphere property of attainable sets and time
optimal control problems}, ESAIM: Control Optim. Calc. Var. 12 (2006) 350--370.

\bibitem{cs1} P. Cannarsa, C. Sinestrari: {\em Convexity properties of the minimum time function}, Calc.
Var. 3 (1995) 273--298.

\bibitem{cs2} P. Cannarsa, C. Sinestrari: {\em Semiconcave Functions, Hamiton-Jacobi Equations and
Optimal Control}, Birkh$\ddot{\textnormal{a}}$ser, Boston (2004).

\bibitem{clarkeold} F. H. Clarke: {\em Optimization and Nonsmooth Analysis}, John Wiley, New York (1983).

\bibitem{clsw} F. H. Clarke, Yu. Ledyaev, R. J. Stern, P. R. Wolenski: {\em Nonsmooth Analysis and Con
trol Theory}, Graduate Texts in Mathematics 178, Springer, New York (1998).

\bibitem{csw} F. H. Clarke, R. J. Stern, P. R. Wolenski: {\em Proximal smoothness and the lower-$C^2$
property}, J. Convex Analysis 2/1-2 (1995) 117--144.

\bibitem{cm} G. Colombo, A. Marigonda: {\em Differentiability properties for a class of non-convex
functions}, Calc. Var. 25 (2005) 1--31.

\bibitem{fed} H. Federer: {\em Curvature measures}, Trans. Amer. Math. Soc. 93 (1959) 418--491.

\bibitem{mord} B. S. Mordukhovich: {\em Variational Analysis and Generalized Differentiation. I: Basic Theory}, Springer, Berlin (2006).

\bibitem{Nacry} F. Nacry, L. Thibault: {\em Distance Function Associated to a Prox-regular set}, Set-Valued Var. Anal. 30 (2022) 731--750.

\bibitem{NS2008} C. Nour, R. J. Stern: {\em Semiconcavity of the bilateral minimal time function: the linear case}, Systems $\&$ Control Letters 57 (2008) 863--866. 

\bibitem{NT1} C. Nour, R. J. Stern, J. Takche: {\em Proximal smoothness and the exterior sphere condition}, J. Convex Analysis 16/2 (2009) 501--514.

\bibitem{nst2} C. Nour, R. J. Stern, J. Takche: {\em The union of uniform closed bal ls conjecture}, Control and Cybernetics 38/4B (2009) 1525--1534.

\bibitem{nst1}  C. Nour, R. J. Stern, J. Takche: {\em The $\theta$-exterior sphere condition, $\varphi$-convexity and local semiconcavity}, Nonlinear Analysis 73/2 (2010) 573--589.

\bibitem{nst5}  C. Nour, R. J. Stern, J. Takche: {\em Validity of the union of uniform closed bal ls conjecture}, J. Convex Analysis 18/2 (2011) 589--600.

\bibitem{nst5bis}  C. Nour, R. J. Stern, J. Takche: {\em Generalized exterior sphere conditions and $\varphi$-convexity}, Discrete Contin. Dyn. Systems 29/2 (2011) 615--622.

\bibitem{nourtakchejota} C. Nour, J. Takche: {\em On the union of closed bal ls property}, J. Optim. Theory Appl.
155 (2012) 376--389.

\bibitem{penot} J.-P.Penot: {\em Calculus Without Derivatives}, Graduate Texts in Mathematics 266, Springer, New York (2013).


\bibitem{prt} R. A. Poliquin, R. T. Rockafellar, L. Thibault: {\em Local differentiability of distance functions}, Trans. Amer. Math. Soc. 352 (2000) 5231--5249.

\bibitem{rock} R. T. Rockafellar: {\em Clarke's tangent cones and the boundaries of closed sets in $R^n$}, Nonlinear Analysis Theory Math. Appl. 3 (1979) 145--154.

\bibitem{rockwet} R. T. Rockafellar, R. J.-B. Wets: {\it Variational Analysis}, Grundlehren der Mathematischen Wissenschaften 317, Springer, Berlin (1998).

\bibitem{shapiro} A. S. Shapiro: {\it Existence and differentiability of metric projections in Hilbert spaces}, SIAM J. Optim. 4 (1994) 231--259.

\end{thebibliography}
\end{document}